\newtheorem{thm}{Theorem}
\newtheorem{lem}[thm]{Lemma}
\newtheorem{cor}[thm]{Corollary}
\numberwithin{thm}{section}
\numberwithin{equation}{section}
\theoremstyle{definition}
\newtheorem{conj}[thm]{Conjecture}
\newcommand{\real}{\mathbb R}
\newcommand{\com}{\mathbb C}
\newcommand{\intg}{\mathbb Z}
\begin{document}

\title[The areal Mahler measure]{Two inequalities on the areal Mahler measure}
\author[C.L. Samuels \and K.K. Choi]{Charles L. Samuels \and Kwok-Kwong Stephen Choi}
\address{Simon Fraser University, Department of Mathematics, 8888 University Drive, Burnaby, BC V5A 1S6, Canada}
\address{University of British Columbia, Department of Mathematics, 1984 Mathematics Road, Vancouver, BC V6T 1Z2, Canada}
\thanks{Research of all authors is supported in part by NSERC of Canada}
\date{\today}

\begin{abstract}
Recent work of Pritsker defines and studies an areal version of the Mahler measure.  We further explore this function with a particular focus
on the case where its value is small, as this is most relevant to Lehmer's conjecture.  
In this situation, we provide improvements to two inequalities established in Pritsker's original paper.
\end{abstract}

\maketitle

\section{Introduction}

For a polynomial $P(z)$ with complex coefficients, the {\it (logarithmic) Mahler measure} of $P$ is given by
\begin{equation*}
	\log M(P) = \frac{1}{2\pi} \int_{0}^{2\pi} \log|P(e^{it})|dt.
\end{equation*}
In view of the well-known identity
\begin{equation} \label{HardyMahler}
	\log M(P) = \lim_{p\to 0^+}\left( \frac{1}{2\pi} \int_{0}^{2\pi} |P(e^{it})|^pdt\right)^{1/p},
\end{equation}
the Mahler measure is often called the $H^0$ Hardy space norm.

If $P(z) = a_N\prod_{n=1}^N(z-z_n)$ and $P(0)\ne 0$ then Jensen's formula implies that $\log M(P) = \log |a_N| + \sum _{|z_n|>1}\log |z_n|$, and we easily deduce the formula
\begin{equation} \label{MeasureJensen}
	\log M(P) = \log|a_0| - \sum_{|z_n|<1}\log |z_n|,
\end{equation}
where $a_0$ is the constant term of $P$.  It is immediately obvious from this formula that $\log M(P)\geq \log|a_N|$ for all $P\in\com[z]$.  
If we further assume that $P\in\intg[z]$, this inequality yields that $\log M(P) \geq 0$.
Moreover, Kronecker's Theorem establishes that $\log M(P) = 0$ if and only if $P$ is a product of cyclotomic polynomials and $\pm z$.   

As part of a famous 1933 article constructing
large prime numbers, D.H. Lehmer \cite{Lehmer} asked whether there exists a constant $c>0$ such that $\log M(P) \geq c$ in all other cases.  In his investigations, he noted that
\begin{equation*}
	\ell(z) = z^{10} + z^9 -z^7 -z^6 -z^5 -z^4-z^3+z+1
\end{equation*}
has Mahler measure equal to $0.1623\ldots$, although he found no polynomial of measure smaller than this.  Additional work performed since that time provides good
evidence that such a constant $c$ does exist and that $c=0.1623\ldots$.

\begin{conj}[Lehmer's Conjecture]
	There exists $c>0$ such that $\log M(P) \geq c$ whenever $P\in\intg[z]$ is not a product of cyclotomic polynomials and $\pm z$.
\end{conj}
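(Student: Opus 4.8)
The plan is to rule out a hypothetical counterexample sequence by combining analytic equidistribution with the integrality of the coefficients, first reducing to the genuinely hard case. I would begin by disposing of the non-reciprocal polynomials: by Smyth's theorem, any $P\in\intg[z]$ that is not reciprocal and is not a unit times a product of cyclotomic factors satisfies $\log M(P)\geq\log M(z^3-z-1)=0.2811\ldots$, which already exceeds Lehmer's value $0.1623\ldots$. Hence it suffices to exhibit an absolute constant $c>0$ valid for reciprocal $P$. Suppose no such $c$ exists; then there is a sequence $P_n\in\intg[z]$, each reciprocal and non-cyclotomic, with $\log M(P_n)\to 0$, and the goal is to derive a contradiction.

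The second step extracts structure from smallness. The inequality $\log M(P)\geq\log|a_N|$ recorded above forces the leading coefficient $a_N$ of $P_n$ to be $\pm1$ for large $n$, while \eqref{MeasureJensen} and \eqref{HardyMahler} force the roots to accumulate on $\{|z|=1\}$; Bilu's equidistribution theorem then shows that the root sets equidistribute toward normalized Lebesgue measure on the unit circle. This is exactly the regime that the areal Mahler measure is built to detect: since it integrates $\log|P|$ over the disk rather than the circle, its value is governed by how far the roots lie strictly inside the disk. I would exploit the inequalities relating the areal measure to the classical one to convert "$\log M(P_n)$ small" into a quantitative statement isolating which roots, and how many, are responsible for the smallness by sitting closest to the circle.

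The third step is where integrality must finally be spent. The standard device is an auxiliary nonzero integer: for a small prime $p$, the resultant $\mathrm{Res}(P_n(z),P_n(z^p))$ is a nonzero rational integer—nonzero precisely because no root of $P_n$ is a root of unity—and therefore at least $1$ in absolute value. On the other hand its size is bounded above in terms of $\log M(P_n)$ and the $p$-adic divisibility of $P_n(\alpha^p)$ over conjugates $\alpha$. Balancing the lower bound $\geq 1$ against this analytic upper bound, as in Dobrowolski's argument, is what produces the known partial estimates; the trouble is that the upper bound acquires a factor growing with $\deg P_n$, because the resultant senses all of the roots simultaneously.

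This last point is the main obstacle, and it is not a technical nuisance but the whole content of the conjecture: every known method loses a factor of $\deg P_n^{-1}$—Dobrowolski's $c_1 d^{-1}(\log\log d/\log d)^3$ being the strongest—so the bound decays to $0$ as the degree grows and never yields an absolute $c$. To break this I would attempt to localize the resultant (or discriminant) argument to the bounded cluster of near-circle roots that the areal-measure estimate of the second step identifies, rather than diffusing the arithmetic across all $d$ conjugates; alternatively I would replace the resultant by a transcendence-style interpolation determinant whose rank can be held bounded independently of $\deg P_n$. Making either localization genuinely degree-uniform is the step that has resisted all attempts, and it is precisely for this reason that the statement stands as a conjecture rather than a theorem.
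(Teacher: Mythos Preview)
The statement you are attempting to prove is labelled in the paper as a \emph{conjecture}, not a theorem; the paper offers no proof of it and does not claim one exists. Lehmer's Conjecture remains open, and the paper merely records it as motivation before turning to the inequalities relating $\log M(P)$ and $\log\|P\|_0$. There is therefore no ``paper's own proof'' to compare your proposal against.

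Your proposal is not a proof either, and you say so yourself: the final paragraph concedes that the degree-uniform localization step ``has resisted all attempts'' and that ``it is precisely for this reason that the statement stands as a conjecture rather than a theorem.'' What you have written is a competent survey of the standard reductions (Smyth for non-reciprocal $P$, monicity from $\log M(P)\geq\log|a_N|$, Bilu equidistribution, the Dobrowolski resultant device) together with an honest identification of the obstruction --- the loss of a factor $(\deg P)^{-1}$ in every known auxiliary-integer argument. That is valuable context, but it is not a proof proposal in any actionable sense: steps one through three are known and do not suffice, and step four is a wish rather than an argument. The suggestion to ``localize'' the resultant to a bounded cluster of near-circle roots via areal-measure estimates is appealing heuristically, but you give no mechanism for producing a nonzero integer from only a subset of the conjugates, and Galois symmetry makes it unclear how such a localization could ever respect integrality.
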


Pritsker \cite{Pritsker} defined the {\it areal Mahler measure} of $P\in\com[z]$ by
\begin{equation*}
	\log\|P\|_0 = \frac{1}{\pi} \iint_{D} \log |P(z)|dA,
\end{equation*}
and it is noted that $\|P\|_0$ satisfies an analogue of \eqref{HardyMahler}
\begin{equation*}
	\log\|P\|_0 = \lim_{p\to 0^+}\left( \frac{1}{\pi} \iint_{D} |P(z)|^p dA\right)^{1/p}.
\end{equation*}
Although the use of Jensen's formula is not obvious here, Pritsker established an analogue of \eqref{MeasureJensen}
\begin{equation} \label{PritskerAlpha}
	\log\|P\|_0 = \log|a_0| + \sum_{|z_n|<1}\left(\frac{|z_n|^2-1}{2}-\log |z_n|\right),
\end{equation}
from which we obtain (see Corollary 1.2 in \cite{Pritsker})
\begin{equation} \label{TrivialInequalities}
	-\frac{\deg P}{2} + \log M(P) \leq \log\|P\|_0 \leq \log M(P).
\end{equation}
Equality occurs in the first inequality precisely when $P$ has no zeros except at the origin, and we obtain equality in the second inequality 
whenever $P$ has no zeros inside the open unit disk.   If we wish to study Lehmer's conjecture, then we will most often consider polynomials
of small Mahler measure.  In this case, the left hand side of \eqref{TrivialInequalities} is negative, and therefore, weaker than even the most trivial bound that $\|P\|_0\geq |a_0|$.

We anticipate that further improvements to \eqref{TrivialInequalities} are possible when $M(P)$ is small and $P\in\intg[z]$.  Indeed, 
examining the invidual terms of the sums in \eqref{MeasureJensen} and \eqref{PritskerAlpha}, we find the power series expansions
\begin{equation*}
	-\log x = \sum_{k=1}^\infty \frac{(1-x)^k}{k} \quad\mathrm{and}\quad \frac{x^2-1}{2} - \log x = (1-x)^2 + 
		\sum_{k=3}^\infty\frac{(1-x)^k}{k}.
\end{equation*}
Of course, this implies that
\begin{equation*}
	\log^2 x = (1-x)^2 + \sum_{k=3}^\infty \left(\sum_{\ell = 1}^{k-1}\frac{1}{\ell(k-\ell)}\right) (1-x)^k,
\end{equation*}
so that the expansions for $\log^2 x$ and $(x^2-1)/2 - \log x$ have the same main terms.  In view of these observations,
we find it reasonable to guess that $\log^2 M(P)$ behaves similarly to $\log \|P\|_0$ when these values are close to zero.

Nevertheless, these remarks are only heuristic and must be made precise.  In this article, we provide upper and lower bounds on 
$\log^2M(P)$ in terms
of $\log \|P\|_0$.  In both cases, our results constitute improvements to \eqref{TrivialInequalities} when $P\in\intg[z]$ and $\|P\|_0$ 
is sufficiently small.  Our first result is the lower bound.

\begin{thm} \label{MainUpper}
	If $P\in\com[z]$ is such that $|P(0)| = 1$ then $\log\|P\|_0 \leq \log^2 M(P)$.
\end{thm}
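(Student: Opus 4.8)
The plan is to express both sides entirely in terms of the zeros of $P$ lying in the open unit disk, reducing the theorem to an elementary one–variable inequality together with the fact that a sum of squares of nonnegative reals does not exceed the square of their sum.

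Write $P(z)=a_N\prod_{n=1}^N(z-z_n)$. Since $|P(0)|=1$, the constant term $a_0=P(0)$ is nonzero, so no $z_n$ equals $0$, both \eqref{MeasureJensen} and \eqref{PritskerAlpha} apply, and $\log|a_0|=0$. Those formulas then read
\[
	\log\|P\|_0=\sum_{|z_n|<1}\left(\frac{|z_n|^2-1}{2}-\log|z_n|\right),
	\qquad
	\log M(P)=-\sum_{|z_n|<1}\log|z_n|.
\]
Hence it suffices to prove two statements: (i) for every real $x$ with $0<x\le 1$ one has $\frac{x^2-1}{2}-\log x\le\log^2 x$; and (ii) for nonnegative reals $u_1,\dots,u_m$ one has $\sum_n u_n^2\le\bigl(\sum_n u_n\bigr)^2$. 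Granting these, apply (i) with $x=|z_n|$ for each $z_n$ with $|z_n|<1$ to obtain $\log\|P\|_0\le\sum_{|z_n|<1}\log^2|z_n|$, and then apply (ii) with $u_n=-\log|z_n|>0$ to obtain $\sum_{|z_n|<1}\log^2|z_n|\le\bigl(-\sum_{|z_n|<1}\log|z_n|\bigr)^2=\log^2M(P)$, which is exactly the claim.

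Statement (ii) is immediate upon expanding the square, since every cross term $u_iu_j$ is nonnegative. For statement (i), I would simply read off the power–series identities already recorded in the introduction: writing $y=1-x\in[0,1)$, the functions $\frac{x^2-1}{2}-\log x$ and $\log^2 x$ each have the form $y^2+\sum_{k\ge 3}c_k y^k$ with \emph{nonnegative} coefficients, the coefficient of $y^k$ being $\frac1k$ in the first case and $\sum_{\ell=1}^{k-1}\frac1{\ell(k-\ell)}$ in the second. Since, for $k\ge3$, the single term $\ell=1$ already contributes $\frac1{k-1}>\frac1k$, the second series dominates the first coefficient by coefficient, and (i) follows, with equality only at $x=1$. (Alternatively, one checks (i) by calculus: $g(x)=\log^2 x+\log x-\frac{x^2-1}{2}$ has $g(1)=0$ and $g'(x)=\bigl(1+2\log x-x^2\bigr)/x$, and $1+2\log x-x^2$ vanishes at $x=1$ with positive derivative on $(0,1)$, so $g'<0$ there and thus $g\ge0$ on $(0,1]$.)

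I expect no genuine obstacle: the crux is the scalar inequality (i), which the discussion preceding the theorem has essentially prepared, so the coefficient comparison renders it routine. The only extra point deserving a remark is the degenerate case where $P$ has no zero in the open unit disk; there the displayed sums are empty and both sides of the desired inequality are $0$, so the statement still holds.
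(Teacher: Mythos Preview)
Your proof is correct and follows essentially the same route as the paper: both arguments use the elementary bound $\sum u_n^2\le(\sum u_n)^2$ with $u_n=-\log|z_n|$ together with the scalar inequality $\frac{x^2-1}{2}-\log x\le\log^2 x$ established by comparing the power-series coefficients $\frac1k$ and $\sum_{\ell=1}^{k-1}\frac1{\ell(k-\ell)}$. The only differences are cosmetic---you present the two steps in the reverse order and add an alternative calculus verification of the scalar inequality.
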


If $P$ has integer coefficients and $P(0)\ne 0$, then $\|P\|_0< 2$ implies automatically that $|P(0)| = 1$. Furthermore, if $M(P) \leq e$ then $\log^2 M(P) \leq \log M(P)$, 
so that Theorem \eqref{MainUpper} does indeed provide an improvement to the upper bound in \eqref{TrivialInequalities}.

Pritsker further notes that the analogue of Lehmer's conjecture is false for $\|P\|_0$, providing the example $P_n(z) = nz^n-1$.
Here, we have that
\begin{equation*}
	\log M(P_n) = \log n\quad\mathrm{and}\quad \log\|P_n\|_0 = O\left(\frac{\log^2 n}{n}\right).
\end{equation*}
Now let $Z(P)$ denote the number of roots of $P$ that lie inside the open unit disk.
In this example, and in another provided by Pritsker, $Z(P_n)$ tends 
to $\infty$ and $n\to \infty$.  

For the remainder of this article, we will always assume that $Z(P) \geq 1$.  Otherwise Pritsker's work \cite{Pritsker} establishes that
$M(P) = \|P\|_0 = |P(0)|$ and there is very little new information to discover.  Our next result gives an upper bound on $\log^2 M(P)/Z(P)$ in terms of $\|P\|_0$.

\begin{thm} \label{MainLower}
	If $P\in\com[z]$ is such that $|P(0)| = 1$ and $\|P\|_0 < e$ then
	\begin{equation} \label{MainLowerIneq}
		\frac{\log^2 M(P)}{Z(P)} \leq \log\|P\|_0\left(1+ \sum_{k=1}^\infty\left(\sum_{\ell=1}^{k+1}\frac{1}{\ell(k+2-\ell)} - \frac{1}{k+2}\right)\log^{k/2}\|P\|_0\right).
	\end{equation}
\end{thm}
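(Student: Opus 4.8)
The plan is to translate $\log M(P)$ and $\log\|P\|_0$ into sums over the roots of $P$ inside the open unit disk, apply the Cauchy--Schwarz inequality to manufacture the factor $Z(P)$, and then compare the two resulting power series coefficient by coefficient. Write $P(z)=a_N\prod_{n=1}^N(z-z_n)$ and let $z_1,\dots,z_Z$ list the roots of $P$ in the open unit disk, each repeated according to its multiplicity, so that $Z=Z(P)$. Since $|P(0)|=1$ we have $P(0)\ne 0$, hence $0<|z_n|<1$ for $n\le Z$; put $x_n=|z_n|$ and $u_n=1-x_n\in(0,1)$, and note $\log|a_0|=0$. Formulas \eqref{MeasureJensen} and \eqref{PritskerAlpha} then read
\begin{equation*}
	\log M(P)=\sum_{n=1}^Z(-\log x_n),\qquad \log\|P\|_0=\sum_{n=1}^Z\left(\frac{x_n^2-1}{2}-\log x_n\right).
\end{equation*}
Cauchy--Schwarz applied to the first identity gives $\log^2 M(P)\le Z\sum_{n=1}^Z\log^2 x_n$, so it suffices to show that $\sum_{n=1}^Z\log^2 x_n$ is at most the right-hand side of \eqref{MainLowerIneq}.

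Next I would insert the power series from the introduction. Set $a_k=\sum_{\ell=1}^{k-1}\frac{1}{\ell(k-\ell)}$ for $k\ge 3$, and recall that $\log^2 x$ and $(x^2-1)/2-\log x$ both have quadratic coefficient $1$ in powers of $1-x$. The two displays above become
\begin{equation*}
	\sum_{n=1}^Z\log^2 x_n=\sum_{n=1}^Z u_n^2+\sum_{k=3}^\infty a_k\sum_{n=1}^Z u_n^k,\qquad \log\|P\|_0=\sum_{n=1}^Z u_n^2+\sum_{k=3}^\infty\frac{1}{k}\sum_{n=1}^Z u_n^k,
\end{equation*}
where the interchange of the (finite) sum over $n$ with the sums over $k$ is legitimate because $0<u_n<1$. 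Abbreviating $S=\log\|P\|_0$, which is positive since $Z\ge 1$, and subtracting, we get $\sum_{n=1}^Z\log^2 x_n=S+\sum_{k=3}^\infty\bigl(a_k-\tfrac1k\bigr)\sum_{n=1}^Z u_n^k$. On the other hand, the substitution $m=k+2$ identifies the right-hand side of \eqref{MainLowerIneq} with $S+\sum_{m=3}^\infty\bigl(a_m-\tfrac1m\bigr)S^{m/2}$, and this series converges precisely because $S<1$, i.e. because $\|P\|_0<e$. Since $a_k-\tfrac1k=\frac{2H_{k-1}-1}{k}>0$ for every $k\ge 3$, the whole theorem reduces to the family of inequalities
\begin{equation*}
	\sum_{n=1}^Z u_n^k\le S^{k/2}\qquad(k\ge 3).
\end{equation*}

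To establish these, note first that $\sum_{n=1}^Z u_n^2\le S$ because in the second display the terms dropped are nonnegative. For $k\ge 2$ and each $j$, the bound $u_j^2\le\sum_{n=1}^Z u_n^2$ gives $u_j^{k-2}\le\bigl(\sum_{n=1}^Z u_n^2\bigr)^{(k-2)/2}$; multiplying by $u_j^2$ and summing over $j$ yields $\sum_{n=1}^Z u_n^k\le\bigl(\sum_{n=1}^Z u_n^2\bigr)^{k/2}\le S^{k/2}$, where the last step uses that $t\mapsto t^{k/2}$ is nondecreasing. This gives the required estimate, and the theorem follows.

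The coefficient bookkeeping --- the index shift $m=k+2$ and the positivity $a_k-\tfrac1k\ge 0$ --- is routine. The one point that genuinely matters is that the crude estimate $u_n\le\sqrt S$ would introduce a spurious factor of $Z$ into $\sum_n u_n^k$, whereas the sharper power-sum inequality $\sum_n u_n^k\le\bigl(\sum_n u_n^2\bigr)^{k/2}$ cancels exactly the factor $Z(P)$ that Cauchy--Schwarz produced; getting this matching right is the heart of the argument, and I do not foresee any further obstacle.
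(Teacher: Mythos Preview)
Your proof is correct and follows essentially the same approach as the paper: Cauchy--Schwarz, the power-series decomposition $\sum_n\log^2 x_n=\log\|P\|_0+\sum_{k\ge 3}(a_k-\tfrac1k)\sum_n u_n^k$, and then the termwise bound $\sum_n u_n^k\le(\log\|P\|_0)^{k/2}$. The only cosmetic difference is that the paper isolates the pointwise estimate $u_n\le\sqrt{\log\|P\|_0}$ as a separate lemma (Lemma~\ref{ElemLemma}) and packages the tail as a function $g$ before bounding, whereas you obtain the same inequality directly from $u_n^2\le\sum_m u_m^2\le\log\|P\|_0$ and argue term by term; the underlying estimates are identical.
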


It can be easily verified that $\sum_{\ell=1}^{k-1}1/(\ell(k-\ell)) \leq 1$ so that the interior sum on the right hand side of \eqref{MainLowerIneq} converges.
Moreover, if we are willing to sacrifice a small amount of sharpness, we obtain a more concrete result.

\begin{cor} \label{MainLowerCor}
	If $P\in\com[z]$ is such that $|P(0)| = 1$ and $\|P\|_0 < e$ then
	\begin{equation} \label{MainLowerCorIneq}
		\frac{\log^2 M(P)}{Z(P)} \leq \log\|P\|_0\left(1+ \frac{2}{3}\cdot\frac{\sqrt{\log\|P\|_0}}{1-\sqrt{\log\|P\|_0}}\right).
	\end{equation}
\end{cor}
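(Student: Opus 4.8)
The plan is to derive Corollary~\ref{MainLowerCor} from Theorem~\ref{MainLower} by showing that every coefficient appearing in the series on the right-hand side of \eqref{MainLowerIneq} is at most $2/3$, and then summing a geometric series. This is a genuine refinement of the observation recorded in the text that $\sum_{\ell=1}^{k-1}1/(\ell(k-\ell)) \le 1$.

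First I would record that $0 < \log\|P\|_0 < 1$. The upper bound is immediate from $\|P\|_0 < e$. For the lower bound, recall that throughout we assume $Z(P) \ge 1$; since $|P(0)| = 1$, the polynomial $P$ has no zero at the origin, so every zero $z_n$ with $|z_n| < 1$ satisfies $0 < |z_n| < 1$ and hence contributes a strictly positive quantity to the sum in \eqref{PritskerAlpha} (by the power series expansion for $(x^2-1)/2-\log x$ quoted in the introduction), while $\log|a_0| = 0$. Therefore $\log\|P\|_0 > 0$, so $x := \sqrt{\log\|P\|_0}$ is a well-defined real number in $(0,1)$ and $\sum_{k=1}^\infty x^k = x/(1-x)$.

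Next I would simplify the coefficients. Applying the partial-fraction identity $\frac{1}{\ell(m-\ell)} = \frac1m\bigl(\frac1\ell + \frac1{m-\ell}\bigr)$ with $m = k+2$ gives $\sum_{\ell=1}^{k+1}\frac{1}{\ell(k+2-\ell)} = \frac{2}{k+2}\sum_{\ell=1}^{k+1}\frac1\ell = \frac{2H_{k+1}}{k+2}$, where $H_j = \sum_{i=1}^j 1/i$. Hence the $k$th coefficient equals $\frac{2H_{k+1}-1}{k+2}$, and the inequality $\frac{2H_{k+1}-1}{k+2} \le \frac23$ is equivalent to $H_{k+1} \le \frac{2k+7}{6}$. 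I would prove this last bound by induction on $k \ge 1$: equality holds at $k=1$ (both sides equal $3/2$), and passing from $k$ to $k+1$ requires only $\frac{1}{k+2} \le \frac13$, which holds for every $k \ge 1$.

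Finally, since $\log^{k/2}\|P\|_0 = x^k > 0$, replacing each coefficient by $2/3$ yields
\[
	1 + \sum_{k=1}^\infty\left(\sum_{\ell=1}^{k+1}\frac{1}{\ell(k+2-\ell)} - \frac{1}{k+2}\right)\log^{k/2}\|P\|_0 \;\le\; 1 + \frac23\sum_{k=1}^\infty x^k \;=\; 1 + \frac23\cdot\frac{x}{1-x},
\]
and multiplying through by $\log\|P\|_0 > 0$ and invoking Theorem~\ref{MainLower} gives \eqref{MainLowerCorIneq}. I do not expect a serious obstacle here; the only points requiring a little care are confirming that $\log\|P\|_0$ lies strictly between $0$ and $1$ (so that $\sqrt{\log\|P\|_0}$ is real and the geometric series converges) and the bookkeeping in the harmonic-number induction.
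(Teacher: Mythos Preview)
Your proposal is correct and follows the same approach as the paper: bound each coefficient in the series of Theorem~\ref{MainLower} by $2/3$, then sum the resulting geometric series. You supply more detail than the paper (which merely asserts the coefficient bound is ``easily verified''), rewriting the sum via partial fractions as $(2H_{k+1}-1)/(k+2)$ and checking $H_{k+1}\le (2k+7)/6$ by induction, and you also make explicit the fact that $0<\log\|P\|_0<1$ needed for the square root and the geometric series.
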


The right hand term in \eqref{MainLowerCorIneq} should be regarded as an error term as $\log\|P\|_0\to 0$, showing that $\log^2M(P)/Z(P)$ is asymptotically bounded
above by $\log\|P\|_0$.  Pritsker's aforementioned example establishes that this asymptotic formula is best possible.  Indeed, taking $P_n(z) = nz^n-1$, 
we see that $M(P) = Z(P) = n$, implying that the left hand side of \eqref{MainLowerIneq} equals $(\log^2 n)/n$.  We also observe that
\begin{equation*}
	\log\|P_n\|_0 = \log n + \frac{n(n^{-2/n} - 1)}{2} = \log n + \frac{n}{2}\left( \exp\left(\frac{-2\log n}{n}\right) - 1\right) .
\end{equation*}
Since
\begin{equation*}
	\exp\left(\frac{-2\log n}{n}\right) - 1 = \sum_{k=1}^\infty \frac{(-2)^k\log^kn}{k! n^k} = \frac{-2\log n}{n} + \frac{2\log^2 n}{n^2} + O\left(\frac{\log^3n}{n^3}\right),
\end{equation*}
we obtain that
\begin{equation*}
	\log\|P_n\|_0 = \frac{\log^2 n}{n} + O\left(\frac{\log^3n}{n^2}\right).
\end{equation*}
In other words, we have shown that
\begin{equation} \label{ExampleAsymptotic}
	\frac{\log^2 M(P_n)}{Z(P_n)} = \log\|P_n\|_0\left( 1 + O\left(\frac{\log n}{n}\right)\right).
\end{equation}
Therefore, the main term in \eqref{MainLowerIneq} provides the best possible upper bound.  We must acknowledge, however, that improvements to the error term
still seem likely.  In this example, Theorem \ref{MainLower} gives
\begin{equation*} \label{WeakPritsker}
	\frac{\log^2 M(P_n)}{Z(P_n)} \leq  \log\|P_n\|_0\left( 1 +  O\left(\frac{\log n}{\sqrt n}\right)\right),
\end{equation*}
the right hand side of which has a larger error term than that of \eqref{ExampleAsymptotic}.

In this example, all roots of $P_n$ that lie inside the unit circle have the same absolute value.  For a general polynomial $P\in\com[z]$ satisfying this property,
we will say that $P$ is {\it balanced}.  In this case, we can obtain a lower bound on $\log^2 M(P)$ analogous to Theorem \ref{MainLower}.

\begin{thm} \label{LowerBoundBalanced}
	If $P$ is a balanced polynomial over $\com$ such that $|P(0)| = 1$ and $\|P\|_0 < e$ then
	\begin{equation*}
		\frac{\log^2 M(P)}{Z(P)} \geq \log\|P\|_0.
	\end{equation*}
\end{thm}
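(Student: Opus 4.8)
The plan is to use the two hypotheses to collapse the claim to an elementary one-variable inequality. Since $P$ is balanced, all of its roots $z_n$ with $|z_n| < 1$ share a common absolute value $r \in (0,1)$, and there are exactly $Z := Z(P)$ of them, counted with multiplicity. Because $|P(0)| = 1$, formula \eqref{MeasureJensen} gives $\log M(P) = -Z\log r$, hence $\log^2 M(P)/Z(P) = Z\log^2 r$, while \eqref{PritskerAlpha} gives $\log\|P\|_0 = Z\bigl(\tfrac{r^2-1}{2} - \log r\bigr)$. (Note that $\log\|P\|_0 > 0$ automatically, so the hypothesis $\|P\|_0 < e$ is not actually needed for this theorem.) Substituting these expressions into the desired inequality and cancelling the common positive factor $Z$, the statement reduces to the assertion
\begin{equation*}
	\log^2 r \geq \frac{r^2-1}{2} - \log r \qquad \text{for every } r \in (0,1).
\end{equation*}

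I would prove this pointwise inequality directly from the power series expansions already recorded in the introduction. Subtracting, one has for $x \in (0,1)$
\begin{equation*}
	\log^2 x - \left(\frac{x^2-1}{2} - \log x\right) = \sum_{k=3}^\infty\left(\sum_{\ell=1}^{k-1}\frac{1}{\ell(k-\ell)} - \frac{1}{k}\right)(1-x)^k,
\end{equation*}
and since $1 - x > 0$ on $(0,1)$ it suffices to show that each coefficient is nonnegative. This is immediate: for $k \geq 3$ the sum $\sum_{\ell=1}^{k-1}\frac{1}{\ell(k-\ell)}$ contains the two distinct positive terms arising at $\ell = 1$ and $\ell = k-1$, each equal to $\frac{1}{k-1}$, so it is at least $\frac{2}{k-1} > \frac{1}{k}$. (Alternatively one could argue by calculus, using that both sides agree at $x = 1$ and comparing derivatives, but the series comparison is the most natural route given the identities already in hand.) Assembling the pieces then gives $\log^2 M(P)/Z(P) = Z\log^2 r \geq Z\bigl(\tfrac{r^2-1}{2} - \log r\bigr) = \log\|P\|_0$, which is the claim.

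I do not expect a serious obstacle. The two points that need care are: verifying that the defining property of a balanced polynomial really does force every root inside the unit circle to have one and the same modulus $r$, so that \eqref{MeasureJensen} and \eqref{PritskerAlpha} each collapse to a single term in $r$ and $Z(P)$; and the elementary check that $\sum_{\ell=1}^{k-1}\frac{1}{\ell(k-\ell)} - \frac{1}{k} \geq 0$ for $k \geq 3$. The essential content is simply the term-by-term domination of the expansion of $(x^2-1)/2 - \log x$ by that of $\log^2 x$ on $(0,1)$ — the rigorous form of the heuristic from the introduction — which also shows that the main term in Theorem \ref{MainLower} cannot be improved, even for balanced polynomials.
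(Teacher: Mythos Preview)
Your proof is correct and follows essentially the same approach as the paper: both reduce to showing that the power-series coefficients $\sum_{\ell=1}^{k-1}\frac{1}{\ell(k-\ell)} - \frac{1}{k}$ are nonnegative for $k\geq 3$. The only organizational difference is that the paper routes through the identity \eqref{RepBound} (already derived in the proof of Theorem~\ref{MainLower}) together with equality in Cauchy's inequality, whereas you collapse directly to the one-variable inequality $\log^2 r \geq (r^2-1)/2 - \log r$; your observation that the hypothesis $\|P\|_0 < e$ is not actually used is also correct.
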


Applying Corollary \ref{MainLowerCor} and Theorem \ref{LowerBoundBalanced} to $P_n(z) = nz^n -1$, we find that
\begin{equation*}
	\frac{\log^2 M(P_n)}{Z(P_n)} =  \log\|P_n\|_0\left( 1 +  O\left(\frac{\log n}{\sqrt n}\right)\right).
\end{equation*}
In other words, our results yield an asymptotic formula having the same main term as \eqref{ExampleAsymptotic}.  Although we cannot obtain
the best error term in this case, the advantage of Theorems \ref{MainLower} and \ref{LowerBoundBalanced} is that they apply far more generally.

\section{Proofs}

The proof of Theorem \ref{MainUpper} is rather straightforward and can be given without additional lemmas.

\begin{proof}[Proof of Theorem \ref{MainUpper}]
	We assume that
	\begin{equation*}
		P(z) = \sum_{n=0}^Na_nz^n = a_N\prod_{n=1}^N(z-z_n),
	\end{equation*}
	where $|a_0| = 1$ and observe immediately from \eqref{MeasureJensen} that
	\begin{equation*}
		\log^2 M(P) = \left( \sum_{|z_n| <1} -\log |z_n|\right)^2 \geq \sum_{|z_n|<1}\log^2 |z_n|
	\end{equation*}
	because $-\log |z_n| > 0$ for $|z_n| < 1$. By using the power series expansion for the logarithm at $1$, we find that
	\begin{align}
		\log^2 M(P) & \geq \sum_{|z_n|<1} \left( \sum_{k=1}^\infty \frac{(1-|z_n|)^k}{k}\right)^2  \nonumber \\
			& = \sum_{|z_n|<1} \sum_{k=2}^\infty \left(\sum_{\ell = 1}^{k-1} \frac{1}{\ell(k-\ell)}\right) (1-|z_n|)^k \label{*} \\
			& \geq \sum_{|z_n|<1}\left( (1-|z_n|)^2 + \sum_{k=3}^\infty \frac{(1-|z_n|)^k}{k}\right) \nonumber \\
			& = \sum_{|z_n|<1} \left(\frac{|z_n|^2-1}{2} + \sum_{k=1}^\infty \frac{(1-|z_n|)^k}{k}\right). \nonumber 
	\end{align}
	The right hand side equals $\log \|P\|_0$ from \eqref{PritskerAlpha} which completes the proof.
\end{proof}

In our proof of Theorem \ref{MainLower}, it is important to have a lower bound on the roots of $P$ in terms of the areal Mahler measure.
If $\|P\|_0$ is small, then Theorem 2.3 (a) of \cite{Pritsker} shows the roots of $P$ are close to the unit circle.  Our next lemma gives a quantitative result in this direction.

\begin{lem}\label{ElemLemma}
	Suppose that
	\begin{equation*}
		P(z) = \sum_{n=0}^Na_nz^n = a_N\prod_{n=1}^N(z-z_n),
	\end{equation*}
	where $|a_0| = 1$, and $\|P\|_0<e$.   Then $|z_n| \geq 1-\sqrt{\log\|P\|_0}$ for all $n$.
\end{lem}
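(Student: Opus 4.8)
The plan is to isolate a single root $z_n$ with $|z_n| < 1$ and bound its contribution to $\log\|P\|_0$ from below, then observe that every other summand in Pritsker's formula \eqref{PritskerAlpha} is nonnegative. Concretely, set $x = |z_n|$ for some root with $x < 1$ (if no such root exists the inequality is trivial since the right side is at most $1$). Each term $g(x) := \tfrac{x^2 - 1}{2} - \log x$ appearing in \eqref{PritskerAlpha} is nonnegative on $(0,1]$ — indeed this is exactly the content of the power series expansion $g(x) = (1-x)^2 + \sum_{k\ge 3}(1-x)^k/k$ displayed in the introduction, every term of which is nonnegative for $x \in (0,1)$. Combining this with $|a_0| = 1$, so $\log|a_0| = 0$, formula \eqref{PritskerAlpha} gives $\log\|P\|_0 \ge g(|z_n|)$ for the chosen root.

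The next step is to bound $g(x)$ from below by something that inverts cleanly. From the series, $g(x) \ge (1-x)^2$ for all $x \in (0,1)$, so $\log\|P\|_0 \ge (1 - |z_n|)^2$. Since we are assuming $\|P\|_0 < e$, the quantity $\log\|P\|_0$ is less than $1$ (and it is nonnegative by the first inequality, once we know $\log\|P\|_0 \ge g(|z_n|) \ge 0$; alternatively one notes $\log\|P\|_0 \ge -\tfrac{\deg P}{2} + \log M(P)$ is not directly useful, but $g \ge 0$ together with $\log|a_0|=0$ already forces $\log\|P\|_0 \ge 0$). Hence $\sqrt{\log\|P\|_0}$ is a well-defined real number in $[0,1)$, and taking square roots in $(1 - |z_n|)^2 \le \log\|P\|_0$ yields $1 - |z_n| \le \sqrt{\log\|P\|_0}$, i.e. $|z_n| \ge 1 - \sqrt{\log\|P\|_0}$. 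Since $n$ was an arbitrary index with $|z_n| < 1$, and for indices with $|z_n| \ge 1$ the bound $|z_n| \ge 1 \ge 1 - \sqrt{\log\|P\|_0}$ is automatic, the inequality holds for all $n$.

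The only mild subtlety — the place I would be most careful — is the hypothesis bookkeeping: one must confirm that $\log\|P\|_0 \ge 0$ (so the square root is real) and strictly less than $1$ (so the resulting lower bound on $|z_n|$ is nontrivial), both of which follow from $|a_0| = 1$, the nonnegativity of $g$ on $(0,1]$, and $\|P\|_0 < e$. No genuine obstacle arises; the argument is a one-term truncation of the series in \eqref{PritskerAlpha} followed by a square root. This is exactly the $k=2$ term that played the starring role in the proof of Theorem \ref{MainUpper}, reused here in the opposite direction.
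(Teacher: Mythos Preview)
Your proof is correct and follows essentially the same approach as the paper: isolate the contribution of a single root in \eqref{PritskerAlpha}, use nonnegativity of the remaining terms together with $|a_0|=1$, bound $\tfrac{x^2-1}{2}-\log x \geq (1-x)^2$ via the power series, and take a square root. The only cosmetic difference is that the paper phrases the isolation step as ``sum $\geq$ max term $= f(\min|z_n|)$'' using that $f$ is decreasing, whereas you bound the sum below by an arbitrary term directly; the content is identical.
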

\begin{proof}
	Let $f(x) = (x^2-1)/2 -\log x$.  Since $f(x)>0$ is decreasing on $(0,1)$, we find  from \eqref{PritskerAlpha} that
	\begin{equation} \label{SumToMax}
		\log\|P\|_0 = \sum_{|z_n|<1} f(|z_n|) \geq \max_{|z_n|<1}\{f(|z_n|)\} = f(\min_{|z_n|<1} |z_n|).
	\end{equation}
	By writing $f$ in its power series expansion at $1$, we find that $f(x) \geq (1-x)^2$ for all $x\in(0,1)$.
	Combing this with the above inequality, we obtain that
	\begin{equation*}
		\min_{|z_n|<1} |z_n| \geq 1- \sqrt{\log\|P\|_0}
	\end{equation*}
	completing the proof.
\end{proof}
	
Assume that $P_N\in\com[z]$ is any sequence of polynomials of degree $N$, each having roots $z_{N,1}, z_{N,2},\ldots,z_{N,N}$.
If $\lim_{N\to\infty}\log\|P_N\|_0 = 0$, then Pritsker showed in Theorem 2.3 (a) of \cite{Pritsker} that
\begin{equation*}
	\liminf_{N\to\infty}\min_{1\leq n\leq N}|z_{N,n}| \geq 1.
\end{equation*}
The special case involving polynomials with integers coefficients is a consequence of Lemma \ref{ElemLemma}.  Now we present our proof of Theorem \ref{MainLower}.

\begin{proof}[Proof of Theorem \ref{MainLower}]
	Applying Cauchy's inequality, we obtain immediately that
	\begin{equation} \label{FirstHolder}
		\log^2 M(P) \leq Z(P)\cdot \sum_{|z_n|<1}\log^2 |z_n|.
	\end{equation}
	By writing the logarithm in its power series expansion as in the proof of Theorem \ref{MainUpper}, we have that
	\begin{align*} \label{LogPowerSeries}
		\sum_{|z_n|<1}\log^2 |z_n| & = \sum_{|z_n|<1} \sum_{k=2}^\infty \left(\sum_{\ell = 1}^{k-1} \frac{1}{\ell(k-\ell)}\right) (1-|z_n|)^k \\
			& = \sum_{|z_n|<1}\left( (1-|z_n|)^2 + \sum_{k=3}^\infty \left(\sum_{\ell = 1}^{k-1} \frac{1}{\ell(k-\ell)}\right) (1-|z_n|)^k\right) \\
			& = \sum_{|z_n|<1}\left( (1-|z_n|)^2 + \sum_{k=3}^\infty\frac{(1-|z_n|)^k}{k} + \sum_{k=3}^\infty \left(\sum_{\ell = 1}^{k-1} \frac{1}{\ell(k-\ell)} - \frac{1}{k}\right) (1-|z_n|)^k\right).
	\end{align*}
	It is easily checked that
	\begin{equation} \label{PowerSeries}
		(1-x)^2 + \sum_{k=3}^\infty \frac{(1-x)^k}{k} = \frac{x^2-1}{2} + \sum_{k=1}^\infty \frac{(1-x)^k}{k} = \frac{x^2-1}{2} - \log x
	\end{equation}
	for all $0<x\leq 1$, which combined with \eqref{PritskerAlpha}, yields
	\begin{align} \label{RepBound}
		\sum_{|z_n|<1}\log^2 |z_n| & = \sum_{|z_n|<1}\left( \frac{|z_n|^2-1}{2} - \log|z_n|+ \sum_{k=3}^\infty \left(\sum_{\ell = 1}^{k-1} \frac{1}{\ell(k-\ell)} - \frac{1}{k}\right) (1-|z_n|)^k\right) \nonumber \\
			& = \log\|P\|_0 + \sum_{|z_n|<1} \sum_{k=3}^\infty \left(\sum_{\ell = 1}^{k-1} \frac{1}{\ell(k-\ell)} - \frac{1}{k}\right) (1-|z_n|)^k \nonumber \\
			& =  \log\|P\|_0 + \sum_{|z_n|<1} (1-|z_n|)^2\sum_{k=1}^\infty \left(\sum_{\ell = 1}^{k+1} \frac{1}{\ell(k+2-\ell)} - \frac{1}{k+2}\right) (1-|z_n|)^k.
	\end{align}
	Now let $g:(0,1)\to \real$ be given by
	\begin{equation*}
		g(x) = \sum_{k=1}^\infty \left(\sum_{\ell = 1}^{k+1} \frac{1}{\ell(k+2-\ell)} - \frac{1}{k+2}\right) (1-x)^k
	\end{equation*}
	Since $g'(x) < 0$
	for all $x\in (0,1)$, we know that $g$ is decreasing on this interval, and Lemma \ref{ElemLemma} implies that $g(|z_n|) \leq g(1-\sqrt{\log\|P\|_0})$.
	Using this observation in \eqref{RepBound}, we see that
	\begin{align*}
		\sum_{|z_n|<1}\log^2 |z_n| & \leq  \log\|P\|_0 + g(1-\sqrt{\log\|P\|_0})\sum_{|z_n|<1} (1-|z_n|)^2 \\
			& \leq  \log\|P\|_0 + g(1-\sqrt{\log\|P\|_0})\sum_{|z_n|<1} \left((1-|z_n|)^2 + \sum_{k=3}^\infty \frac{(1-|z_n|)^k}{k}\right) \\
			& \leq  \log\|P\|_0 + g(1-\sqrt{\log\|P\|_0})\log\|P\|_0
	\end{align*}
	and the result follows from \eqref{FirstHolder}.
\end{proof}

Although the proof of Theorem \ref{MainLower} uses several estimates, we believe the weakest of these are the use of Cauchy's Inequality and Lemma \ref{ElemLemma}.
Our application of Cauchy's Inequality in \eqref{FirstHolder} admits equality whenever the values $|z_n|$ are all equal (i.e., whenever $P$ is balanced).  
On the other hand, inequality \eqref{SumToMax}
in the proof of Lemma \ref{ElemLemma} is sharpest when one root of $P$ is somewhat far from the unit circle while the others are close.  To summarize, these estimates
are sharpest in opposing cases, so that their simultaneous use leads to a weaker estimate that should be expected.

Nevertheless, Lemma \ref{ElemLemma} is used only to estimate the error term in Theorem \ref{MainLower}.  In Pritsker's sequence $P_n(z) = nz^n-1$, all roots of $P_n$
have the same absolute value, meaning that we have equality in \eqref{FirstHolder}.  Indeed, we found that
\begin{equation*}
	\frac{\log^2 M(P_n)}{Z(P_n)} = \log\|P_n\|_0\left( 1 + O\left(\frac{\log n}{n}\right)\right),
\end{equation*}
while Theorem \ref{MainLower} gives
\begin{equation*}
	\frac{\log^2 M(P_n)}{Z(P_n)} \leq  \log\|P_n\|_0\left( 1 +  O\left(\frac{\log n}{\sqrt n}\right)\right).
\end{equation*}
As expected, these estimates have the same main term, while differing in their error terms.

\begin{proof}[Proof of Corollary \ref{MainLowerCor}]
	It is easily verified that $\sum_{\ell=1}^{k-1}1/(\ell(k-\ell)) - 1/k \leq 2/3$ for all $k\geq 3$.  Therefore, Theorem \ref{MainLower} yields
	\begin{equation*}
		\frac{\log^2 M(P)}{Z(P)} \leq \log\|P\|_0\left(1+ \frac{2}{3}\sum_{k=1}^\infty\log^{k/2}\|P\|_0\right)
	\end{equation*}
	and the result follows by summing the geometric series.
 \end{proof}
 
 \begin{proof}[Proof of Theorem \ref{LowerBoundBalanced}]
 	Since all roots of $P$ have the same absolute value, we obtain equality in \eqref{FirstHolder} giving
	\begin{equation*}
		\log^2 M(P) = Z(P)\cdot \sum_{|z_n|<1}\log^2 |z_n|.
	\end{equation*}
	Therefore, we may apply \ref{RepBound} to obtain that
	\begin{equation*}
		\frac{\log^2 M(P)}{Z(P)} = \log\|P\|_0 + \sum_{|z_n|<1} (1-|z_n|)^2\sum_{k=1}^\infty \left(\sum_{\ell = 1}^{k+1} \frac{1}{\ell(k+2-\ell)} - \frac{1}{k+2}\right) (1-|z_n|)^k.
	\end{equation*}
	It is straightforward to verify that
	\begin{equation*}
		\sum_{\ell = 1}^{k+1} \frac{1}{\ell(k+2-\ell)} - \frac{1}{k+2} \geq 0
	\end{equation*}
	for all $k\geq 1$, so the result follows.
	
 \end{proof}


\begin{thebibliography}{}
\bibitem{Lehmer} D.H. Lehmer, {\it Factorizations of certain cyclotomic functions}, Ann. of Math. {\bf 34} (1933), 461--479.
\bibitem{Pritsker} I.E. Pritsker, {\it An areal analog of Mahler's measure}, Illinois J. Math {\bf 52} (2009), 347--363.
\end{thebibliography}
\end{document}